\newtheorem{theorem}{Theorem}[section]
\newtheorem{lemma}[theorem]{Lemma}
\newtheorem{proposition}[theorem]{Proposition}
\newtheorem{conjecture}[theorem]{Conjecture}
\newtheorem{corollary}[theorem]{Corollary}
\title{Large cliques in graphs with high chromatic number}
\author{Csaba Bir\'o}
\date{May 2010}
\address{University of Louisville, Louisville, KY 40292, USA}
\email{csaba.biro@louisville.edu}
\begin{document}

\begin{abstract}
We study graphs whose chromatic number is close to the order of the graph (the
number of vertices). Both when the chromatic number is a constant multiple of
the order and when the difference of the chromatic number and the order is a
small fixed number, large cliques are forced. We study the latter situation,
and we give quantitative results how large the clique number of these graphs
have to be.  Some related questions are discussed and conjectures are posed.

Please note that the results of this article were significantly generalized.
Therefore this paper will never be published in a journal.
See instead \cite{Bir-Fur-Jah-11-u} for the more general results.
%\keywords{chromatic number \and clique \and list coloring \and Ramsey number}
%\subclass{05C15}
\end{abstract}

\maketitle

\section{Introduction}

It is well known that for all positive integer $c$, triangle free graphs with
chromatic number $c$ exist. The number of vertices required to obtain such
graphs grows quadratically with $c$. So when the chromatic number is close to
the order of the graph, large cliques are forced. But just how large these
cliques have to be? To make these questions precise, we introduce the following
definition.
\[
Q(n,c)=\min\{\omega(G):|V(G)|=n\text{ and }\chi(G)=c\}
\]

Suppose that $0<r<1$. One can ask about the behavior of the sequence
$Q(n,\lceil rn \rceil)$, in particular, if it converges to infinity. The answer
is positive for all $r$, and it is fairly easy to see: it follows from the
Ramsey Theory result that a graph with no cliques of size $t$ and $n$ vertices
has independence number $\Omega(n^{1/t})$, which is a straightforward
consequence of the classical Erd{\H o}s--Szekeres \cite{Erd-Sze-35} bound for
the Ramsey numbers.

%For detailed proof, see the last part (commented out).

In this paper we study the behavior $Q(n,n-k)$ if $k$ is small relative to $n$.
In the following, if not otherwise indicated, all the statements about
$Q(n,n-k)$ is to be understood with the condition ``if $n$ is sufficiently
large''. It is immediate from the definition that $Q(n,n)=n$, $Q(n,n-1)=n-1$,
$Q(n,n-2)=n-3$ (remove the edges of $C_5$ from $K_n$ to see the last result).
We will determine $Q(n,n-k)$ for $k\leq 6$.

Throughout the paper we will call single vertex color classes in a valid
coloring of a graph ``singletons''. Similarly, we will use the term
``doubleton'' for color classes containing two vertices.

\begin{proposition}
Let $k\geq 3$. If $n$ is large enough, then $n-2k+2\leq Q(n,n-k)\leq n-k-1$.
\end{proposition}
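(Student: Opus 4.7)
My plan splits into an upper-bound construction and a lower-bound structural argument.

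\textbf{Upper bound.} Let $H$ be the vertex-disjoint union of a single $C_5$ and $k-2$ further edges, placed on a $(2k+1)$-subset of $[n]$, and set $G=K_n-E(H)$. An $(n-k)$-coloring of $G$ is obtained from an optimal $3$-coloring of the $C_5$ (two doubletons and a singleton), one doubleton per extra edge, and singletons for the other $n-2k-1$ vertices---in total $3+(k-2)+(n-2k-1)=n-k$ classes. This is optimal: outside the gadget the graph is complete and adjacent to everything in the gadget, so those $n-2k-1$ vertices demand $n-2k-1$ pairwise distinct colors none of which can appear on the gadget; the $C_5$ then needs $3$ more colors and each extra edge needs $1$ more. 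Cliques of $G$ are exactly independent sets of $H$, so $\omega(G)=\alpha(H)=2+(k-2)+(n-2k-1)=n-k-1$.

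\textbf{Lower bound.} Take any $G$ with $\chi(G)=n-k$ and fix an optimal coloring, with $s$, $d$, $t$ classes of size $1$, $2$, and $\ge 3$ respectively. The identities $s+d+t=n-k$ and $s+2d+\sum|T_i|=n$ give $d+2t\le k$, hence $s\ge n-2k$. Two structural facts, both coming from optimality, drive the rest. (i) Every two color classes have an edge of $G$ between them---else they could be merged---so in particular the singletons form a clique. (ii) Every non-singleton class $C$ contains a \emph{preferred} vertex $c\in C$ adjacent to all singletons: otherwise each vertex of $C$ has a distinct singleton non-neighbor, and a parallel swap merging $C$'s vertices into those singletons' classes strictly reduces the color count, contradicting optimality. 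Combining (i) and (ii) gives a clique of size $s+1\ge n-2k+1$.

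Unless we are in the extremal distribution $(s,d,t)=(n-2k,\,k,\,0)$, we actually have $s\ge n-2k+1$, so one preferred vertex already beats the target $n-2k+2$. The extremal case is the real work: I would show that among the $k$ doubletons $D_1,\ldots,D_k$ one can find preferred vertices $c_i\in D_i$, $c_j\in D_j$ in two distinct doubletons with $c_i\sim c_j$, extending the clique by two. Suppose for contradiction this fails. For each $D_i=\{c_i,b_i\}$ with only $c_i$ preferred, the non-preferred $b_i$ has a singleton non-neighbor $v_i$, and the swap that replaces $D_i,\{v_i\}$ by $\{c_i\},\{v_i,b_i\}$ produces an equivalent optimal coloring in which $c_i$ is a singleton. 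Doing the analogous swap at $D_j$ lands in an optimal coloring in which both $c_i$ and $c_j$ are singletons, whereupon (i) forces $c_i\sim c_j$---a contradiction. The main technical obstacle I anticipate is the degenerate subcase where the two candidate singletons $v_i,v_j$ coincide; resolving it requires a secondary swap that uses the tightness-edge between $D_i$ and $D_j$ and splits further on whether the non-preferred endpoints $b_i,b_j$ are adjacent.
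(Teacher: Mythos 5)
Your upper-bound construction (complement of $C_5\cup(k-2)K_2$ plus isolated vertices) differs from the paper's (complement of $C_{2k+1}$), but both are correct: in each case the removed graph has maximum matching $k$, so $\chi(G)=n-k$, and independence number $n-k-1$, so $\omega(G)=n-k-1$.

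For the lower bound your reduction to the extremal distribution $(s,d,t)=(n-2k,k,0)$ and your facts (i) and (ii) match the paper, but you and the paper diverge in how the extremal case is finished, and your route has a genuine gap. The paper simply takes the set $S$ of preferred vertices (one per doubleton); if $S$ has an edge, $Q\cup\{c_i,c_j\}$ is a clique of size $n-2k+2$, and if $S$ is independent, then deleting $S$ removes an independent set, so $\chi(G-S)\ge n-k-1$ on $n-k$ vertices, giving $\omega(G-S)\ge n-k-1\ge n-2k+2$ (using $k\ge 3$). Your swap argument instead tries to certify an edge inside $S$ directly, and two subcases break it. First, the swap at $D_i$ requires $b_i$ to have a singleton non-neighbor, i.e.\ that $D_i$ has exactly one preferred vertex; you never address doubletons with \emph{both} vertices adjacent to every singleton, for which no such $v_i$ exists. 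Second, the coincidence $v_i=v_j$ is not merely a technical nuisance: after the first swap, $v$ is no longer a singleton, so the second swap is blocked, and the "secondary swap" you allude to does not obviously produce the needed clique. In that subcase one can show only $b_2\sim c_1$ and $b_1\sim c_2$, and the vertices $c_1,c_2,b_1,b_2,v$ then contain no triangle one can append to $Q\setminus\{v\}$, so you would be forced to bring in a third doubleton and essentially reinvent the paper's independent-set removal argument. In short, your structural framework is sound but the extremal-case case analysis is incomplete, and the paper's "delete $S$, use that removing an independent set drops $\chi$ by at most one" step is the missing clean idea.
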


\begin{proof}
To see that $Q(n,n-k)\leq n-k-1$, remove the edges of $C_{2k+1}$ from $K_n$.
It remains to be shown that $Q(n,n-k)\geq n-2k+2$. Consider a graph $G$ with
$n$ vertices and $\chi(G)=n-k$. Consider an optimal coloring of the vertices of
$G$ with $n-k$ colors. If there are at least $n-2k+1$ singletons, it is easy to
see that there is a $K_{n-2k+2}$ subgraph, so we may assume that there are
$n-2k$ singletons, forming a clique $Q$. The remaining $2k$ vertices are in $k$
classes, so all of them are doubletons. In each of the doubleton classes, one
of the vertices is adjacent to every vertex in $Q$, otherwise $\chi(G)<n-k$.
Denote the set of these vertices by $S$. If $S$ is not an independent set, then
there is a $K_{n-2k+2}$ and we are done. If $S$ is an independent set, then
remove $S$ from $G$. The remaining graph $G'$ has $|V(G')|=n-k$ and
$\chi(G')\geq n-k-1$, so there is a clique of size $n-k-1\geq n-2k+2$ in $G'$
and in turn in $G$.
\end{proof}

\begin{corollary}
$Q(n,n-3)=n-4$
\end{corollary}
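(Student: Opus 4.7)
The plan is essentially a one-line observation: specialize the proposition to $k=3$. Plugging $k=3$ into the upper bound $Q(n,n-k)\le n-k-1$ gives $Q(n,n-3)\le n-4$, and plugging $k=3$ into the lower bound $Q(n,n-k)\ge n-2k+2$ gives $Q(n,n-3)\ge n-4$. The two bounds coincide, so we are done.

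Thus I would simply write: ``By the proposition with $k=3$, we have $n-4 = n-2k+2 \le Q(n,n-3) \le n-k-1 = n-4$, hence equality.'' There is no real obstacle here; the entire content of the corollary is the algebraic coincidence that the lower and upper bounds of the proposition meet exactly at $k=3$. It is worth noting, however, that this coincidence is what makes $k=3$ the first nontrivial case whose value is pinned down by the general proposition alone, whereas for $k\ge 4$ a gap remains between $n-2k+2$ and $n-k-1$ that will presumably require a separate argument in the subsequent sections.
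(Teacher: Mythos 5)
Your proof is correct and is precisely the argument the paper intends: the corollary follows immediately by setting $k=3$ in the proposition, where the lower bound $n-2k+2$ and the upper bound $n-k-1$ both equal $n-4$.
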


Based on these, the author (naively) made the following conjecture.
\begin{conjecture}\label{conj:main}
Let $k$ be a positive integer. If $n$ is large enough, then
\[
Q(n,n-k)=n-2k+\lceil k/2\rceil.
\]
\end{conjecture}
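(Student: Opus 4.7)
Construct the extremal graph by taking $K_n$ and deleting the edges of $\lfloor k/2\rfloor$ pairwise vertex-disjoint copies of $C_5$, together with one further edge disjoint from these cycles when $k$ is odd. Deletion of a $C_5$ from a clique on its five vertices drops the chromatic number by $2$ and the clique number by $3$; deletion of a single edge drops each by $1$. Summing contributions over the removed subgraphs, the construction yields a graph with $\chi=n-k$ and $\omega=n-\lfloor 3k/2\rfloor=n-2k+\lceil k/2\rceil$, giving the upper bound $Q(n,n-k)\le n-2k+\lceil k/2\rceil$.

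\textbf{Lower bound: setup.} Given $G$ with $\chi(G)=n-k$, fix an optimal coloring and let $s_i$ denote the number of color classes of size exactly $i$, so that $\sum_i s_i=n-k$, $\sum_i i\,s_i=n$, and therefore $\sum_{i\ge 2}(i-1)s_i=k$. Let $Q$ be the union of the singletons; the merging argument used in the proposition above shows that $Q$ is a clique of size $|Q|=n-\sum_{i\ge 2}i\,s_i$. To reach $\omega(G)\ge n-\lfloor 3k/2\rfloor$ it suffices to extend $Q$ by adjoining, from the non-singleton classes, all but at most $\lfloor k/2\rfloor$ of their total count $\sum_{i\ge 2}s_i$ worth of representatives, since this would contribute $\sum_{i\ge 2}i\,s_i-\lfloor 3k/2\rfloor$ additional vertices. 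For each non-singleton class $C$ let $A_C=\{v\in C : v \text{ is adjacent to every vertex of }Q\}$; when $A_C\neq\emptyset$ for every $C$ and one can pick one vertex per class so that the chosen set is a clique, the result is in fact a clique of size $n-k$, well within the target.

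\textbf{Accounting and main obstacle.} The technical heart is the analysis of ``bad'' classes (where $A_C=\emptyset$) and of ``bad pairs'' of classes (where no choice of representatives is pairwise adjacent). The plan is a weighting argument charging each unit of chromatic deficit, that is each contribution of $1$ to $k=\sum_{i\ge 2}(i-1)s_i$, with at most $3/2$ units of clique loss. The model situation is two doubletons whose representatives cannot be chosen mutually adjacent: the induced subgraph on these four vertices together with a blocking vertex in $Q$ is, up to relabeling, the complement of a $C_5$, which is exactly the obstruction built into the upper-bound construction. Larger non-singleton classes and mixtures of class sizes must then be shown to be no worse, with a solitary ``bad edge'' accounting for the $+\lceil k/2\rceil$ correction when $k$ is odd. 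For $k\le 6$, the number of partitions of $k$ into parts $\ge 2$ is small, and each ``shape'' of non-singleton class system can be handled by direct bookkeeping — this is the case analysis the author presumably carries out. The step I expect to be hardest is, for general $k$, uniformly ruling out hybrid obstructions — for instance a tripleton interacting with a doubleton to create a configuration tighter than two disjoint $C_5$s — and confirming that the $3/2$ ratio is rigid. A clean treatment likely requires a Kempe-chain recoloring or augmenting argument rather than pure enumeration, consistent with the author's note that the systematic version appears in \cite{Bir-Fur-Jah-11-u}.
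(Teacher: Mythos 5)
The statement you set out to prove is stated as a \emph{conjecture} in the paper, not as a theorem, and the paper explicitly records that it was subsequently disproven in \cite{Bir-Fur-Jah-11-u}. The paper only establishes the conjectured equality for $k\le 6$: the initial proposition gives $Q(n,n-k)\ge n-2k+2$, which already matches the conjectured value for $k\le 4$, and Theorem~\ref{thm:main} (proved by a careful list-coloring analysis of the case where the non-singleton classes are five doubletons whose distinguished representatives induce a $C_5$) raises the bound to $n-2k+3$, which matches the conjectured value for $k=5,6$ only. For $k\ge 7$ the conjectured bound $n-2k+\lceil k/2\rceil$ exceeds $n-2k+3$, no general lower bound of that strength is proved, and none can be, because the conjecture is false.

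Your upper-bound paragraph is fine and is essentially the Jahanbekam--West construction cited in the paper (the complement of $rC_5$, or of $(r-1)C_5+P_3$ when $k$ is odd, padded by dominating vertices): removing $\lfloor k/2\rfloor$ disjoint $C_5$'s (plus one edge when $k$ is odd) from $K_n$ drops $\chi$ by $k$ and $\omega$ by $\lfloor 3k/2\rfloor$, so for $n$ large enough $Q(n,n-k)\le n-2k+\lceil k/2\rceil$.

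The fatal gap is exactly where you yourself locate the difficulty: the claim that the ``$3/2$ ratio is rigid'' for general $k$. It is not. Your charging scheme assigns to each unit of chromatic deficit (each contribution of $1$ to $\sum_{i\ge 2}(i-1)s_i=k$) at most $3/2$ units of clique loss, with the $\overline{C_5}$ on two doubletons as the purported worst case. But larger or hybrid configurations of non-singleton classes can in fact force strictly more clique loss per unit of deficit, and this is precisely what breaks the conjectured equality for large $k$. Because the conclusion you aim for is false in general, no repair of the charging argument can close the gap; an argument of this kind can only succeed for small $k$, where the possible shapes of the non-singleton class system can be enumerated and the obstructions controlled directly, which is how the paper handles $k\le 6$.
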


\emph{This conjecture was disproven in \cite{Bir-Fur-Jah-11-u}.}

Jahanbekam and West \cite{Jah-Wes} observed that $Q(n,n-k)$ is at most the
conjectured value whenever $n\geq\lceil 5k/2\rceil$ and they conjecture that
this threshold on $n$ is both sufficient and necessary for equality. Their
constructions are the following. If $k$ is even, let $r=k/2$ and let $G$ be the
complement of $rC_5$. If $k$ is odd, let $r=(k+1)/2$ and let $G$ be complement
of $(r-1)C_5+P_3$. Then adding a dominating vertex to $G$ increases $|V(G)|$,
$\chi(G)$ and $\omega(G)$ each by $1$.

Our main result is the following.
\begin{theorem}\label{thm:main}
Let $k\geq 5$. If $n$ is large enough, then $Q(n,n-k)\geq n-2k+3$.
\end{theorem}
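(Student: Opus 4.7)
The plan is a proof by contradiction: assume $G$ has $|V(G)|=n$, $\chi(G)=n-k$, and $\omega(G)\le n-2k+2$. Fix an optimal $(n-k)$-coloring, let $s$ be the number of singleton classes, and let $Q$ be their union, a clique of size $s$. Let $A$ denote the set of vertices in $V(G)\setminus Q$ adjacent to every vertex of $Q$, and $B:=V(G)\setminus(Q\cup A)$. The proposition gives $s\ge n-2k$, the assumption gives $s\le n-2k+2$, and a recoloring argument identical to the one in the proposition shows that every non-singleton color class meets $A$.

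For $s=n-2k+2$, any $v\in A$ gives the clique $Q\cup\{v\}$ of size $n-2k+3$, a contradiction. For $s=n-2k+1$, a size count forces the non-singleton classes to be $k-2$ doubletons and one triple, so $|A|\ge k-1$; either $G[A]$ contains an edge $uv$ yielding $Q\cup\{u,v\}$ of the required size, or $A$ is independent and deleting $A$ gives $G'$ with $\chi(G')\ge n-k-1$. Since $\chi(G')\le|V(G')|$ we get $|A|\le k+1$, and applying $Q(n',n'-1)=n'-1$ together with $Q(n',n'-2)=n'-3$ gives $\omega(G)\ge\omega(G')\ge n-k-2$, which is $\ge n-2k+3$ precisely when $k\ge 5$.

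The main case is $s=n-2k$: every non-singleton class is a doubleton and $|A|\ge k$. A triangle in $G[A]$ finishes immediately; if $A$ is independent, deleting $A$ yields $\omega(G)\ge n-k-1\ge n-2k+3$ by the same removal argument. The obstacle is the intermediate case where $G[A]$ is triangle-free but contains an edge $s_1s_2$: then $Q\cup\{s_1,s_2\}$ has size only $n-2k+2$, and neither another vertex of $A$ (by triangle-freeness) nor any vertex of $B$ (by its definition) can extend it.

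To handle this obstacle I would use a further recoloring argument on the color partners $t_1,t_2$ of $s_1,s_2$. If $s_1$ were non-adjacent to both endpoints of some other doubleton $\{s_j,t_j\}$, then replacing the classes $\{s_1,t_1\},\{s_j,t_j\}$ by $\{t_1\},\{s_1,s_j,t_j\}$ produces a proper $(n-k)$-coloring with one extra singleton. If $t_1\in B$ (missing some $v\in Q$), the new singletons $\{t_1\}$ and $\{v\}$ can be merged to save a color, contradicting $\chi(G)=n-k$; if $t_1\in A$, the new coloring falls under the case $s=n-2k+1$ already resolved, again a contradiction. Thus $s_1$ (and symmetrically $s_2$) has an edge into every other doubleton. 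Combined with triangle-freeness of $G[A]$ and $s_1s_2\in E(G)$, a case-by-case analysis of each doubleton $\{s_j,t_j\}$ with $j\notin\{1,2\}$ forces either a vertex $t_j\in A$ adjacent to both $s_1,s_2$ (producing the forbidden triangle in $A$), or a configuration of several $t_j\in B$ adjacent to both $s_1,s_2$ that share enough structure to produce the desired clique $(Q\setminus\{v\})\cup\{s_1,s_2,t_j,t_{j'}\}$ of size $n-2k+3$. This last step --- combining a pigeonhole on the missed vertices of $Q$ with the forced edges between doubletons --- is the technical heart of the argument, and the threshold $k\ge 5$ (giving at least $k-2\ge 3$ other doubletons) is exactly what allows the combinatorial analysis to close.
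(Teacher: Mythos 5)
The setup of your proposal matches the paper up through the reduction to the main case ($s=n-2k$, every non-singleton class a doubleton, a representative set in $A$ per doubleton that is triangle-free); the handling of $s\in\{n-2k+1,\,n-2k+2\}$, of a triangle in $G[A]$, and of independent $G[A]$ all track the paper. From that point on you diverge from the paper's route in a way that leaves a genuine gap.

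The paper first reduces $k\ge 6$ to $k=5$ via $R(3,3)=6$: if the representative set has $\ge 6$ vertices and is triangle-free it contains a $3$-element independent set, whose removal reduces the problem to a smaller $k$, closing by induction. It then isolates the single structure $k=5$, where the representatives induce a $C_5$, and runs a list-coloring argument: labeling $S=\{v_1,\dots,v_5\}$, $U=\{u_1,\dots,u_5\}$, it tries to recolor $T=S\cup U$ using only four of the five freed-up colors plus the colors $L_i$ of $Q$-vertices that $u_i$ misses, and applies Hall's theorem to the auxiliary bipartite graph on $U$ versus $L=\bigcup L_i$; every Hall obstruction is shown to yield an $(n-7)$-clique. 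Your proposal does neither. You attempt a uniform treatment of all $k\ge 5$ and replace the list-coloring machinery with a ``forced edges plus pigeonhole'' plan, and the critical last step is asserted rather than proved.

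Concretely, your forced-edge lemma (each of $s_1,s_2$ must have an edge into every other doubleton, via the recolor $\{s_1,t_1\},\{s_j,t_j\}\mapsto\{t_1\},\{s_1,s_j,t_j\}$) is correct, but it does not force what you need. Triangle-freeness of $G[A]$ gives that $s_j$ misses at least one of $s_1,s_2$, so each $t_j$ is forced adjacent to at least one of $s_1,s_2$; it is forced adjacent to \emph{both} only when $s_j$ is adjacent to neither, and in the natural $C_5$ configuration on the representatives only one such $j$ exists for a given edge $s_1s_2$. That lone $t_j$ is then in $B$, missing a single $v\in Q$, and $(Q\setminus\{v\})\cup\{s_1,s_2,t_j\}$ has size $n-2k+2$, one short. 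Your proposal says a pigeonhole on missed $Q$-vertices together with the forced edges finishes, but the $t_j$'s can miss pairwise distinct vertices of $Q$, and nothing in the argument so far constrains edges among the $t_j$'s or between $t_j$ and $s_i$ beyond the forced ones; so there is no visible way to produce the fourth vertex $t_{j'}$ with the required adjacencies. This is exactly where the paper has to work hardest, and it does so by the structured case analysis on $l=|L|$ and Hall blocks, which is not a cosmetic difference: it is the mechanism that converts a failure to recolor into an explicit clique. As written, your argument does not close, and the claim that $k\ge5$ ``is exactly what allows the combinatorial analysis to close'' is not substantiated.
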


\begin{corollary}
Conjecture~\ref{conj:main} is true for $k\leq 6$.
\end{corollary}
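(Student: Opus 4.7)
We argue by contradiction. Let $G$ be a graph on $n$ vertices with $\chi(G) = n-k$ and $\omega(G) \le n-2k+2$, fix an optimal coloring, and let $s$ denote the number of singleton color classes. The singletons form a clique $Q$, so $s \le n-2k+2$, and the counting used in the proposition also forces $s \ge n-2k$. Writing $s = n-2k+j$ with $j \in \{0,1,2\}$, the non-singleton classes carry exactly $j$ vertices beyond two per class.

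The first step is to reduce to $j=0$, i.e.\ to a coloring with $n-2k$ singletons and $k$ doubletons. If some non-singleton class $C = \{a_1,\dots,a_t\}$ with $t \ge 3$ admits an SDR $v_1,\dots,v_t$ of singletons with $a_i \not\sim v_i$, then replacing the $t$ singletons $\{v_i\}$ together with $C$ by the $t$ doubletons $\{v_i,a_i\}$ produces a proper $(n-k-1)$-coloring, contradicting optimality. By Hall's theorem the failure of an SDR forces some $I\subseteq C$ whose common deficiency in $Q$ is smaller than $|I|$; in each such subcase either some $a_i$ is adjacent to all of $Q$---yielding a clique of size $s+1$, already at least $n-2k+3$ when $j\ge 2$---or a modified swap decreases $j$ by one without otherwise disturbing the coloring. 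Iterating, we may assume $j=0$.

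In the resulting canonical coloring, the argument in the proposition provides a set $S$ of $k$ vertices, one from each doubleton, each adjacent to every vertex of $Q$. Split into three cases according to $G[S]$. If $S$ contains a triangle $\{b,b',b''\}$, then $Q \cup \{b,b',b''\}$ is already a clique of size $n-2k+3$. If $S$ is independent, then $G' := G - S$ has $n-k$ vertices and $\chi(G')\ge n-k-1$, since an $(n-k-2)$-coloring of $G'$ augmented by the independent class $S$ would color $G$ with $n-k-1$ colors; hence $|V(G')|-\chi(G')\le 1$, so $\omega(G')\ge n-k-1\ge n-2k+3$ for $k\ge 4$, and this clique sits inside $G$. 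The remaining case is that $S$ has an edge $bb'$ but no triangle; then $Q\cup\{b,b'\}$ is only a clique of size $n-2k+2$, and one further vertex adjacent to the whole set $Q\cup\{b,b'\}$ must be produced.

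This last case is the technical heart of the proof and where I expect the main obstacle to lie. The no-triangle hypothesis forces each remaining $b''\in S$ to fail to be adjacent to at least one of $b,b'$, which combined with the swap-type constraints already developed on each doubleton severely restricts how the partners of $b,b'$ relate to $Q$ and to the other doubletons. The plan is to perform one additional swap---for instance exchanging the partner of $b$ with a well-chosen singleton, or splicing the two doubletons containing $b$ and $b'$---and to argue that for $k\ge 5$ (so that at least three other doubletons are present) the resulting coloring either contradicts optimality, or places a new vertex into $S$ that forms a triangle with $b,b'$ (returning us to the first case), or directly exhibits a vertex adjacent to $Q\cup\{b,b'\}$. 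The role of the threshold $k\ge 5$ is precisely to provide enough doubletons for this final swap to succeed; for $k=4$ the Jahanbekam--West construction $\overline{2C_5}$ saturates the proposition's bound, so correctly distinguishing the allowed configurations from that forbidden one is the most delicate step.
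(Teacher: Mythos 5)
Your proposal tries to re-derive the whole of Theorem~\ref{thm:main} from scratch rather than invoking it, and in doing so it both leaves the genuinely hard case open and omits the upper-bound half of the corollary. On the second point: the corollary asserts \emph{equality} $Q(n,n-k)=n-2k+\lceil k/2\rceil$, so it needs two bounds. The lower bound for $k=5,6$ is Theorem~\ref{thm:main}, and the matching upper bound $Q(n,n-k)\le n-2k+\lceil k/2\rceil$ is the Jahanbekam--West construction (complement of $rC_5$, plus dominating vertices) recalled just before the theorem; for $k=5,6$ both bounds equal $n-2k+3$, and for $k\le4$ the proposition already finishes. Your write-up never mentions the construction, so even a completed version of your argument would establish only one inequality, not the conjecture.

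The larger gap is that the case you label ``the technical heart'' is exactly where the paper's proof of Theorem~\ref{thm:main} lives, and you leave it as a plan. Your trichotomy (triangle / independent / edge-but-no-triangle) also misses the paper's key reduction: since $R(3,3)=6$, a triangle-free $S$ with $|S|=k\ge6$ must contain an independent triple, which is removed and the $k-2$ case invoked by induction. That Ramsey step collapses everything down to $k=5$ with $S$ inducing a $C_5$; your ``edge-but-no-triangle'' bucket is much broader and unstructured by comparison. For the $C_5$ configuration the paper does not do a ``one additional swap'': it sets up a list-recoloring problem on the ten vertices $u_1,\dots,u_5,v_1,\dots,v_5$, defines the spare-color lists $L_i$ for the $u_i$, builds an auxiliary bipartite graph between $\{u_i\}$ and $L=\bigcup L_i$, and runs a Hall-type case analysis over $l=|L|\in\{1,2,3,\ge4\}$, with each failure of the recoloring directly exhibiting an $(n-7)$-clique in $G$. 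None of that machinery appears in your sketch, and the heuristic that a single swap will either drop $\chi$, produce a triangle, or hand you the extra clique vertex does not survive contact with the actual $C_5$ configuration. Your preliminary reductions (normalizing the singleton count, the triangle and independent subcases of $S$) are correct in outline and parallel the paper's opening moves, but they are the easy part; as written this is a proof outline, not a proof.
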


\section{Motivation and related research}

The original motivation of this research was an analogue problem for partially
ordered sets (posets).

A \emph{realizer} is a set of linear extensions of the
poset $P$, such that their intersection (as relations) is $P$. The minimum
cardinality of a realizer is the dimension of the poset, a central notion in
poset theory. The ``standard example'' $S_n$ is the poset formed by considering
the $1$-element subsets and the $n-1$ element subsets of a set of $n$ elements,
ordered by inclusion. It is well known that $\dim(S_n)=n$, but there are posets
of arbitrarily large dimensions without including even $S_3$ as a subposet.

Hiraguchi \cite{Hir-55} proved that the dimension does not exceed half of the
number of elements of the poset. Bogart and Trotter \cite{Bog-Tro-73} showed
that for large $n$, the only $n$-dimensional poset on $2n$ points is $S_n$. But
what happens if the dimension is slightly less than half the number of
elements, is not known. We conjecture the following.

\begin{conjecture}
For every $t<1$, but sufficiently close to $1$
there is a $c>0$ such that if a poset has $2n$ points, and its dimension is at
least $tn$, then it contains a standard example of dimension $cn$.
\end{conjecture}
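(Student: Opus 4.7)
The plan is to prove this stability statement by combining a dimension-minimality argument with a Ramsey/density-type extraction of a standard example from an approximate one. The anchor result is Bogart--Trotter: the conjecture is a ``fractional'' or stability version of their extremal theorem, and I would try to mimic their extremal analysis while carefully tracking how much slack $1-t$ buys.

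Let $P$ be a poset on $2n$ points with $\dim(P)\geq tn$, and fix a minimum realizer $\mathcal{R}=\{L_1,\ldots,L_d\}$ with $d\geq tn$. By the minimality of $\mathcal{R}$, for each $L_i$ there is at least one critical pair $(a_i,b_i)$ reversed only by $L_i$. Criticality yields $a_i\not<b_i$ together with $D(a_i)\subseteq D(b_i)$ and $U(b_i)\subseteq U(a_i)$, where $D$ and $U$ denote down- and up-sets. When $t$ is close to $1$, pigeonhole forces the multisets $\{a_i\}$ and $\{b_i\}$ to occupy most of $P$, and a cleanup step allows one to pass to a subcollection of size $\Omega(n)$ on which the $a_i$ are distinct, the $b_i$ are distinct, and $\{a_i\}\cap\{b_j\}=\emptyset$.

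Next, I would analyze the bipartite comparability pattern between $A=\{a_i\}$ and $B=\{b_i\}$. The standard-example pattern demands $a_i<b_j$ for $i\neq j$ together with $A$ and $B$ being antichains. The containment $D(a_i)\subseteq D(b_i)$ already implies that any reverse comparability $a_i>b_j$ forces $b_j<b_i$, producing a defect inside $B$; symmetrically for defects inside $A$. I would bound the number of such internal defects together with the number of ``missing'' cross-comparabilities where $a_i$ and $b_j$ are incomparable, by a Hiraguchi-type pairing lemma: each abundant defect can be converted into a saving in the size of the realizer. Quantitatively, the total number of defects should be at most $f(1-t)\cdot n^2$ with $f(x)\to 0$ as $x\to 0$. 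With few defects, a density/Tur\'an-type extraction in the bipartite incidence graph yields a subset of size $cn$ on which $A$ and $B$ are antichains and every cross-comparability $a_i<b_j$ holds for $i\neq j$, producing a standard example $S_{cn}$.

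The principal obstacle is the pairing lemma: an incomparability at $(a_i,b_j)$ affects the behavior of both $L_i$ and $L_j$, so translating local defects into a global reduction in realizer size requires careful double-counting. A secondary obstacle is that the naive Ramsey extraction from an approximate pattern gives only a logarithmic substructure; to reach linear size $cn$, one must exploit the containment structure $D(a_i)\subseteq D(b_i)$ to rule out most non-standard sub-patterns a priori, rather than relying on Ramsey alone. Making the trade-off between $1-t$ and the number of admissible defects quantitative, and in turn extracting the right constant $c=c(t)$, is the heart of the argument.
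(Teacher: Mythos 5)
This statement is posed in the paper as an open conjecture; the paper offers no proof of it (indeed, it records that even the much weaker graph analogue is only trivial for $t>1/2$, and the poset version is left entirely open). Your text is a research plan rather than a proof, and the steps you yourself flag as ``obstacles'' are exactly the places where the argument is missing, not merely technical.

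Concretely: (1) The cleanup step claiming that pigeonhole yields $\Omega(n)$ critical pairs $(a_i,b_i)$ with all $a_i$ distinct, all $b_i$ distinct, and $\{a_i\}\cap\{b_j\}=\emptyset$ is unjustified. Minimality of the realizer gives one critical pair per linear extension, but nothing prevents a single element from serving as $a_i$ for many indices $i$; extracting a linear-sized ``matching'' of critical pairs with disjoint coordinates is essentially the Bogart--Trotter structural statement itself, and making it robust under the slack $1-t$ is the content of the conjecture, not a preliminary. (2) The ``pairing lemma'' asserting that each defect (an internal comparability in $A$ or $B$, or a missing cross-comparability) can be converted into a saving of one linear extension is stated but not proved, and it is not clear it is even true: removing or merging linear extensions in a realizer is a global operation, and a local defect at $(a_i,b_j)$ does not obviously let you reverse the critical pairs of both $L_i$ and $L_j$ with fewer extensions. (3) You correctly note that Ramsey-type extraction from an approximate pattern gives only logarithmic substructure; the proposed remedy (using $D(a_i)\subseteq D(b_i)$ to exclude bad sub-patterns) is not carried out, and without it the final step produces $S_{c\log n}$ rather than $S_{cn}$. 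Until these three steps are supplied, the statement remains a conjecture.
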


It is frequently noted that poset problems can be translated to graph theory
problems and vice versa by changing chromatic numbers of graphs to dimension of
posets, and cliques in graphs to standard examples in posets. Note that the
above conjecture would translate to the following statement: For every $t<1$,
but sufficiently close to $1$ there is a $c>0$ such that if a graph has $n$
points, and its chromatic number is at least $tn$, then it contains a clique of
$cn$ points. This graph version is trivial for all $t>1/2$.

There are some potentially interesting relation of these questions to list
coloring of graphs. Reed and Sudakov \cite{Ree-Sud-05} showed that if the
chromatic number is at least about $3/5$ of the number of vertices, then the
list chromatic number and the chromatic number are equal. List coloring also
comes up explicitly and implicitly in the proof of Theorem \ref{thm:main},
further suggesting that other list coloring problems may be related. Another
article with potential connections is \cite{Far-Mol-Ree-05}.

\section{Proof of Theorem~\ref{thm:main}}

Consider a graph $G$ with $|V(G)|=n$, $\chi(G)=n-k$, and consider an optimal
coloring $\gamma:V(G)\to\{1,\ldots,n-k\}$ of $G$. If the number of singletons is at
least $n-2k+2$ it is easy to see that there is a $K_{n-2k+3}$ subgraph. So we
may assume that the number of singletons is either $n-2k+1$ or $n-2k$. In
either case, the singletons form a clique, call it $Q$.

If $|Q|=n-2k+1$, then the remaining $2k-1$ vertices form $k-1$ color classes:
that is $k-2$ doubletons and one $3$-element class. In each of these classes,
at least one of the vertices is adjacent to every vertex in $Q$, otherwise
$\chi(G)\leq n-k$. Call these vertices $S$. If $S$ is not an independent set,
then there is a $K_{n-2k+3}$; otherwise remove $S$ to form the graph $G_1$. Now
$|V(G_1)|=n-k+1$, $\chi(G_1)\geq n-k-1$ so $\omega(G_1)\geq n-k-2\geq n-2k+3$,
providing the clique that we seek.

Hence we may assume that $|Q|=n-2k$, and the remaining color classes are $k$
doubletons. It is still true that every color class contains a vertex that is
adjacent to every vertex in $Q$, and we will keep calling this set $S$. But now
all we can deduce immediately that if there is a triangle in $S$, we are done.
So we may assume $S$ contains no triangles.

Since the diagonal Ramsey number $R(3,3)=6$, it follows that if $S$ is triangle
free and $k=|S|\geq 6$, then $S$ contains a $3$-element independent set.
Removing this from $G$ we get a graph $G_2$ with $|V(G_2)|=n-3$, and
$\chi(G_2)\geq n-k-1$. The difference is $k-2$, so an induction argument
implies that $G_2$ contains a clique of size $n-3-2(k-2)+\lceil
(k-2)/2\rceil=n-2k+\lceil k/2\rceil$.

%Before anyone would get really hopeful that a general induction migth prove
%the whole conjecture, let's clarify that even in the next step ($k\geq 7$),
%$R(4,4)=18$, so we will have problems for $7\leq k<18$. It seems hopeless. Not
%even talking about the fact that we don't understand Ramsey numbers very well.

So we may assume that $k=|S|=5$ and that $S$ does not contain any triangle or
$3$-element independent set, i.e.~it induces a $C_5$. Denote the elements
of $S$ by $v_1,\ldots,v_5$, and the other elements in the doubleton classes
by $u_1,\ldots,u_5$ such that $\{u_i,v_i\}$ are the doubletons for
$i=1,2,\ldots,5$. (Figure~\ref{fig:T}.)
\begin{figure}
\psfrag{u1}{$u_1$}
\psfrag{u2}{$u_2$}
\psfrag{u3}{$u_3$}
\psfrag{u4}{$u_4$}
\psfrag{u5}{$u_5$}
\psfrag{v1}{$v_1$}
\psfrag{v2}{$v_2$}
\psfrag{v3}{$v_3$}
\psfrag{v4}{$v_4$}
\psfrag{v5}{$v_5$}
\psfrag{Q}{$Q$}
\includegraphics[scale=0.5]{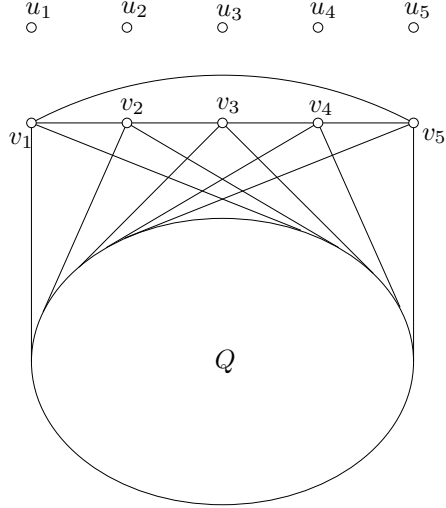}
\caption{The structure of $G$ when $|S|=5$}\label{fig:T}
\end{figure}

Let $U$ be the graph induced by $u_1,\ldots,u_5$, and 
$T$ be the graph induced by $u_1,\ldots,u_5,v_1,\ldots,v_5$. Observe that
the complement of $T$ is triangle free. This is because a triangle in
$\overline{T}$ is a 3-element independent set in $T$. Removing this from $G$ we
would get a graph $G_3$ with $|V(G_3)|=n-3$, $\chi(G_3)\geq n-6$, and so
$\omega(G_3)\geq n-7$.

Without loss of generality we will assume that $\{\gamma(v):v\in
Q\}=\{6,7,\ldots,n-5\}$, and $\gamma(v_i)=\gamma(u_i)=i$ for $i=1,\ldots,5$. If
we can recolor the vertices of $T$ with fewer than 5 colors, we get a
contradiction with the chromatic number of $G$. The set of colors that are
available for recoloring is easily identifiable. Let $E=\{1,\ldots,4\}$ and let
$L_i=\{\gamma(v): v\in Q\text{ and }u_i\not\sim v\}$. Our goal is to list color
$T$ with the list $E$ assigned to the vertices of $S$ and the lists $E\cup L_i$
assigned to $u_i$ for $i=1,\ldots,5$. This way, we never use the color $5$, so
we achieve the desired contradiction.

Note that $L_i\neq\emptyset$ for any $i$. Indeed, if $L_k=\emptyset$, then
$|S\cup \{u_k\}|=6$ so this set contains either a triangle or a $3$-element
independent set, and previously we have seen that this is sufficient to finish
the proof.

Let $L=\cup_{i=1}^5 L_i$ and $l=|L|$. We have seen that $l>0$. We will analyze
$5$ cases, depending on the magnitude of $l$. In each case, we will attempt to
list color $T$ with the given lists, and if it is impossible, we will show the
existence of a clique of size $n-7$. In each case, we will use the colors
$1,2,3$ to color $S$; the colors $1$ and $2$ will be used twice and the color
$3$ will be used once. However, we will first focus on coloring the vertices
of $U$ with the following self-imposed restrictions:
\begin{itemize}
\item We will not use the colors $1$ and $2$.
\item We will use color $3$ at most once.
\end{itemize}
We can use the color $4$ completely freely,
because we won't need it for $S$, and of course we can use the colors from
$L_i$ to color $u_i$. Once $U$ is colored, we find the vertex $u_k$ (if exists)
for which color $3$ was used (otherwise pick $u_k$ arbitrarily). Then we assign
the color $3$ to $v_k$ and the colors $1$ and $2$ accordingly to the other four
vertices in $S$ to create proper coloring.

In many cases it will be useful to illustrate the coloring of $U$ with a
bipartite graph $B$. One of the partite set is $V(U)$, the other is $L$ and the
vertex $u_k$ is adjacent to the color $c$ if $c\in L_k$. A matching in $B$
corresponds to a valid (possibly partial) coloring of $U$. This will be
especially useful in the cases when $l$ is large, so a large matching can be
found. We will make use of Hall's Theorem, and a set of vertices that violates
the condition of the theorem will be called a Hall-block. In particular, if a
set of vertices $X$ has the property that $|N(X)|<|X|$, and $|X|=i$ and
$|N(X)|=j$, then we will call $X$ an ``$i$-$j$'' block.

\subsection{If $l\geq 4$}\label{case1} Suppose that $L=\{a,b,c,d\}$ and let $B$ the
bipartite graph defined above. If $|N(\{a,b,c\})|\leq 2$ and
$|N(\{b,c,d\})|\leq 2$, then $|N(\{a,b,c,d\})|\leq 4$, so there is vertex with
an empty list, contradiction. So there is a $3$-element subset $L'$ of $L$ with
$|N(L')|\geq 3$. Without loss of generality, $L'=\{a,b,c\}$.

If there is matching that covers $L'$, we are done: we can use the
corresponding coloring for $3$ vertices in $N(L')$ and the colors $3$ and $4$
for the remaining two vertices. If not, then there is a Hall-block in the
subgraph of $B$ induced by $L'$ and $N(L')$. The only possible block is 2-1
block, because 2-0 or 1-0 blocks would mean $l<4$. Without loss of generality,
$|N(\{a,b\})|=1$. We again have to separate
cases by the size of $N(L')$.

\subsubsection{If $|N(L')|=3$} In this case $B$ must contain the following
subgraph. (The vertices $u_1,\ldots,u_5$ may be permuted. This is also true for 
the later subcases of Case~\ref{case1} and for similar claims in later cases.)
\begin{center}
\psfrag{u1}{$u_1$}
\psfrag{u2}{$u_2$}
\psfrag{u3}{$u_3$}
\psfrag{u4}{$u_4$}
\psfrag{u5}{$u_5$}
\psfrag{a}{$a$}
\psfrag{b}{$b$}
\psfrag{c}{$c$}
\psfrag{d}{$d$}
\includegraphics[scale=0.5]{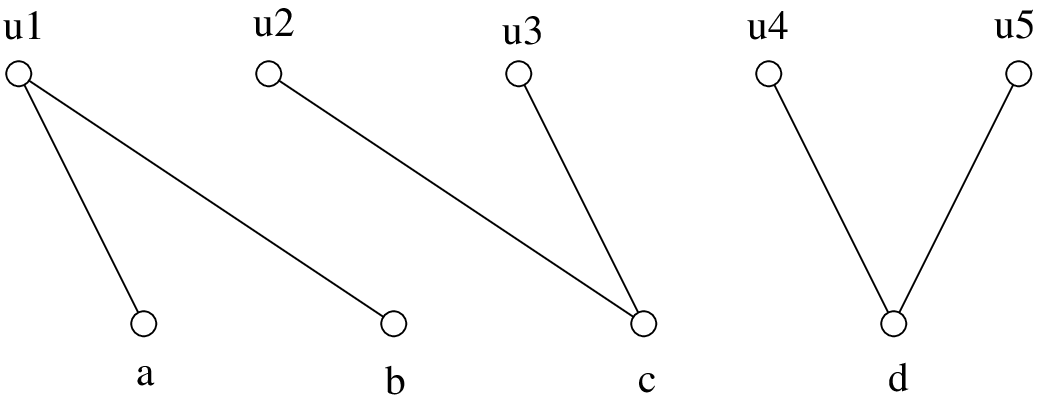}
\end{center}
This clearly contains a matching of size $3$, and the remaining vertices can be
colored with colors $3$ and $4$.

\subsubsection{If $|N(L')|=4$} In this case $B$ must contain the following
subgraph.
\begin{center}
\psfrag{u1}{$u_1$}
\psfrag{u2}{$u_2$}
\psfrag{u3}{$u_3$}
\psfrag{u4}{$u_4$}
\psfrag{u5}{$u_5$}
\psfrag{a}{$a$}
\psfrag{b}{$b$}
\psfrag{c}{$c$}
\psfrag{d}{$d$}
\includegraphics[scale=0.5]{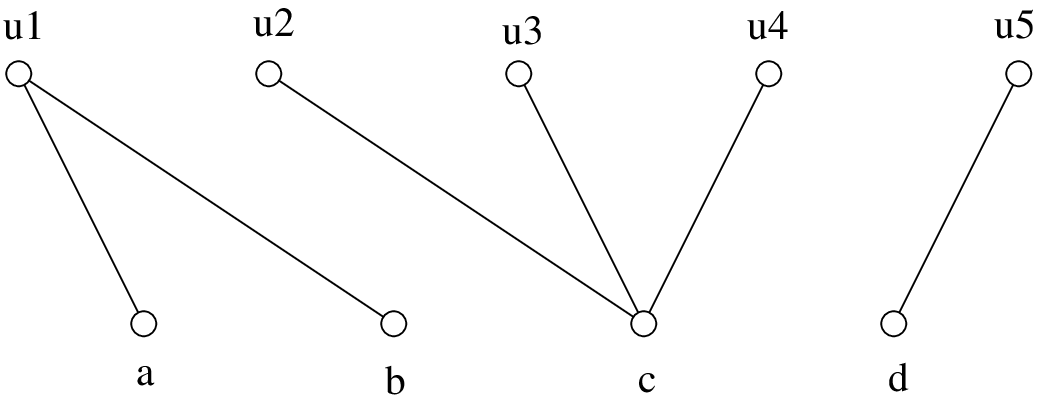}
\end{center}
This, again contains a matching of size $3$.

\subsubsection{If $|N(L')|=5$}\label{case1c} In this case $B$ must contain one of the
following subgraphs.
\begin{center}
\begin{tabular}{cc}
\psfrag{u1}{$u_1$}
\psfrag{u2}{$u_2$}
\psfrag{u3}{$u_3$}
\psfrag{u4}{$u_4$}
\psfrag{u5}{$u_5$}
\psfrag{a}{$a$}
\psfrag{b}{$b$}
\psfrag{c}{$c$}
\psfrag{d}{$d$}
\includegraphics[scale=0.5,bb=0 0 350 1]{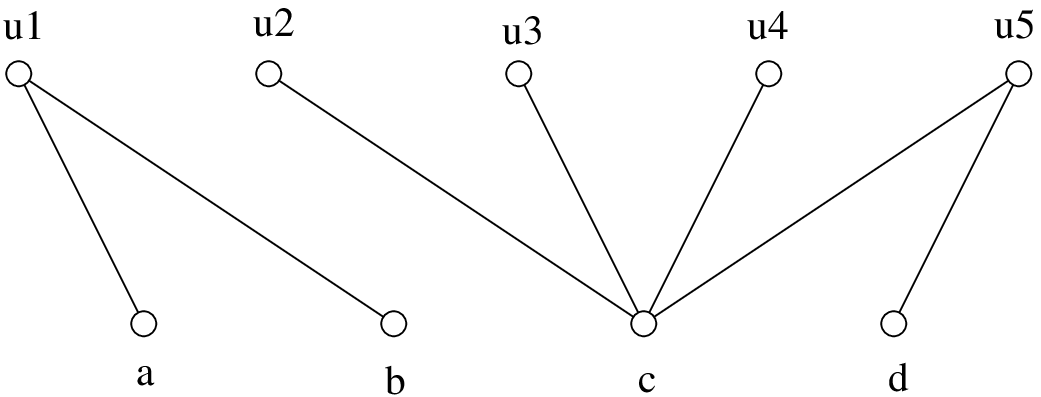}
&
\psfrag{u1}{$u_1$}
\psfrag{u2}{$u_2$}
\psfrag{u3}{$u_3$}
\psfrag{u4}{$u_4$}
\psfrag{u5}{$u_5$}
\psfrag{a}{$a$}
\psfrag{b}{$b$}
\psfrag{c}{$c$}
\psfrag{d}{$d$}
\includegraphics[scale=0.5]{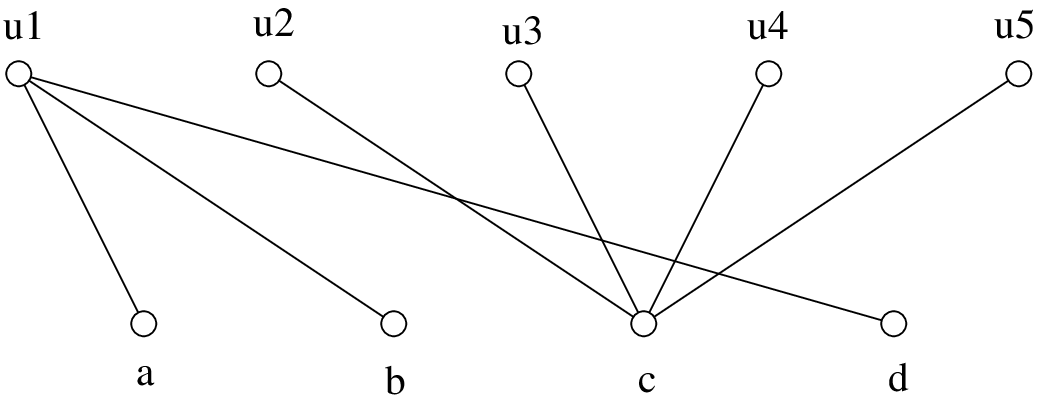}
\end{tabular}
\end{center}
The one on the left contains a matching of size $3$. The one on the right shows
a pattern that we will see again later. If there is a non-edge between any two
of $u_2,u_3,u_4,u_5$, then those two can be colored with $c$, $u_1$ can be
colored with any of $a$, $b$ or $d$, and the rest two vertices can use $3$ and
$4$. If $u_2,u_3,u_4,u_5$ induce a clique, then let $v$ be the vertex of $Q$
with color $c$; then $(Q\setminus\{v\})\cup\{u_2,u_3,u_4,u_5\}$ induce a clique
of size $n-7$.

\subsection{If $l=3$} Suppose $L=\{a,b,c\}$, and let $B$ be the graph as
above. Unless there is a 2-1 block in $B$, there is a complete matching from
the colors, and we are done. If there is a 2-1 block, then $B$ contains the
following subgraph.
\begin{center}
\psfrag{u1}{$u_1$}
\psfrag{u2}{$u_2$}
\psfrag{u3}{$u_3$}
\psfrag{u4}{$u_4$}
\psfrag{u5}{$u_5$}
\psfrag{a}{$a$}
\psfrag{b}{$b$}
\psfrag{c}{$c$}
\includegraphics[scale=0.5]{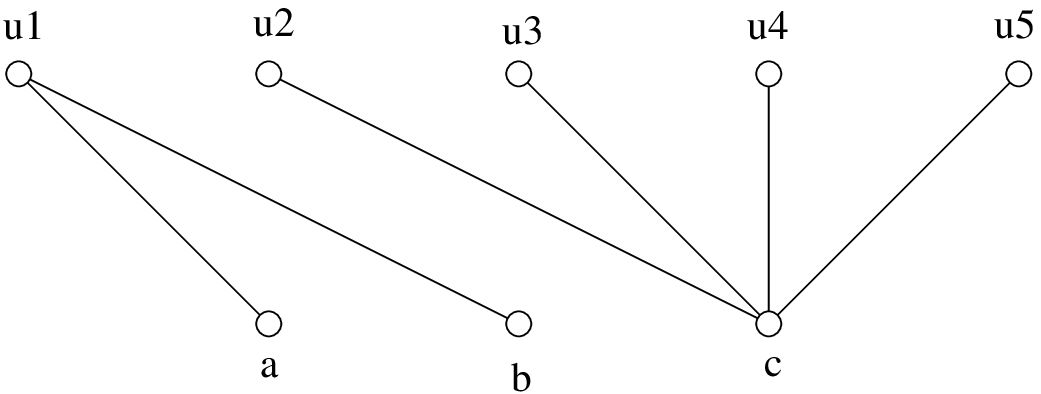}
\end{center}
This situation can be dealt with in exactly the same way as the second subcase
of Case~\ref{case1c}. The only difference is that there are only two colors for $u_1$,
but we really only need one.

\subsection{If $l=2$} Suppose $L=\{a,b\}$. In this case, each list is one
of $\{a\}$, $\{b\}$, and $\{a,b\}$. Let $A=\{u_i:L_i=\{a\}\}$,
$B=\{u_i:L_i=\{b\}\}$, and $M=\{u_i:L_i=\{a,b\}\}$. We claim that we may assume
that all of $A$, $B$ and $M$ induce a clique.

To see this, suppose that $A$ does not induce a clique. Then there is a nonedge
between two vertices of $A$. We can color these with the color $a$, and look at
the remaining three vertices. At least one of those must contain $b$ in its
list, otherwise $l<2$. So find one that has $b$ in its list, color that
with $b$, and the other two takes the colors $3$ and $4$. Similarly, we can see
that $B$ induces a clique. It remains to be seen that $M$ induces a clique.
Suppose not, and there is a nonedge between vertices of $M$. Without loss of
generality, these vertices are $u_1$ and $u_2$. The set $L_3\cup L_4\cup L_5$
is a nonempty subset of $L$, so it either contains $a$ or $b$, without loss of
generality $a\in L_3\cup L_4\cup L_5$. Again, we may assume $a\in L_3$.
Then use the following coloring:
\begin{align*}
u_1,u_2&\to b\\
u_3&\to a\\
u_4&\to 3\\
u_5&\to 4
\end{align*}

On the other had, it is clear that we may assume that $\{u_1,\ldots,u_5\}$ does
not induce a clique. If it does, and $v$ and $w$ are the vertices of $Q$ with $\gamma(v)=a$ and $\gamma(w)=b$, then
$(Q\setminus\{v,w\})\cup\{u_1,\ldots,u_5\}$ induces a clique of size $n-7$.

Now we will show that we may assume that $A\cup M$ and $B\cup M$ are cliques.
Without loss of generality $A\cup M$ is not a clique, so there is a nonedge, and
it can only be between a vertex whose list is $\{a\}$ (say $u_1$) and another
vertex whose list is $\{a,b\}$ (say $u_2$). If $b\in L_3\cup L_4\cup L_5$ (say
$b\in L_3$), then use the coloring:
\begin{align*}
u_1,u_2&\to a\\
u_3&\to b\\
u_4&\to 3\\
u_5&\to 4
\end{align*}
If $b\not\in L_3\cup L_4\cup L_5$, then $L_1=L_3=L_4=L_5=\{a\}$. If
$\{u_1,u_3,u_4,u_5\}$ induces a clique, we are done, otherwise, say
$u_1\not\sim u_3$, use the following coloring:
\begin{align*}
u_1,u_3&\to a\\
u_2&\to b\\
u_4&\to 3\\
u_5&\to 4
\end{align*}

So we have shown that there is a nonedge in $\{u_1,\ldots,u_5\}$, say
$u_1\not\sim u_2$, and $L(u_1)=\{a\}$ and $L(u_2)=\{b\}$. If
$\{a,b\}\not\subseteq L_3\cup L_4\cup L_5$ then either four of lists is $\{a\}$
and the fifth is $\{b\}$ or vice versa; both cases can be handled similarly as
before. So we have $\{a,b\}\subseteq L_3\cup L_4\cup L_5$. Then there is a list
among $L_3,L_4,L_5$ that contains $a$, say $L_3$, and another that contains
$b$, say $L_4$, and then the following coloring works:
\begin{align*}
u_1,u_2&\to 4\\
u_3&\to a\\
u_4&\to b\\
u_5&\to 3
\end{align*}

\subsection{If $l=1$}
Suppose that $L=\{a\}$ and $\overline{U}$ contains a matching of size $2$, say
$\{u_1,u_2\}$ and $\{u_3,u_4\}$. Then we can use the following coloring:
\begin{align*}
u_1,u_2&\to a\\
u_3,u_4&\to 4\\
u_5&\to 3
\end{align*}
Otherwise $\overline{U}$ is a star (recall that it is triangle free). Without
loss of generality, the center of the star is $u_1$. Let $v$ be the vertex of
$Q$ for which $\gamma(v)=a$.  Then $(Q\setminus \{v\})\cup\{u_2,u_3,u_4,u_5\}$
is a clique of size $n-7$.\qed

\section{Acknowledgements}
The author wish to thank to Joshua Cooper, Vladimir Nikiforov and L\'aszl\'o
Sz\'ekely for their comments and ideas on these questions.

\bibliography{bib,extra}

\providecommand{\bysame}{\leavevmode\hbox to3em{\hrulefill}\thinspace}
\providecommand{\MR}{\relax\ifhmode\unskip\space\fi MR }
% \MRhref is called by the amsart/book/proc definition of \MR.
\providecommand{\MRhref}[2]{%
  \href{http://www.ams.org/mathscinet-getitem?mr=#1}{#2}
}
\providecommand{\href}[2]{#2}
\begin{thebibliography}{1}

\bibitem{Bir-Fur-Jah-11-u}
Csaba Bir{\'o}, Zolt{\'a}n F{\"u}redi, and Sogol Jahanbekam, \emph{Large
  chromatic number and ramsey graphs}, arXiv:1103.3917 [math.CO].

\bibitem{Bog-Tro-73}
Kenneth~P. Bogart and William~T. Trotter, \emph{Maximal dimensional partially
  ordered sets. {II}. {C}haracterization of $2n$-element posets with dimension
  $n$}, Discrete Math. \textbf{5} (1973), 33--43.

\bibitem{Erd-Sze-35}
Paul Erd{\H o}s and George Szekeres, \emph{A combinatorial problem in
  geometry}, Compositio Math. \textbf{2} (1935), 463--470.

\bibitem{Far-Mol-Ree-05}
Babak Farzad, Michael Molloy, and Bruce Reed, \emph{{$(\Delta-k)$}-critical
  graphs}, J. Combin. Theory Ser. B \textbf{93} (2005), 173--185.

\bibitem{Hir-55}
Toshio Hiraguchi, \emph{On the dimension of orders}, Sci. Rep. Kanazawa Univ.
  \textbf{4} (1955), no.~1, 1--20.

\bibitem{Jah-Wes}
Sogol Jahanbekam and Douglas~B. West,
  \url{http://www.math.uiuc.edu/~west/regs/chromcliq.html}.

\bibitem{Ree-Sud-05}
Bruce Reed and Benny Sudakov, \emph{List colouring when the chromatic number is
  close to the order of the graph}, Combinatorica \textbf{25} (2005), no.~1,
  117--123.

\end{thebibliography}
\bibliographystyle{amsplain}

\end{document}